\newtheorem*{theorem*}{Theorem}
\newtheorem{theorem}{Theorem}[section]
\newtheorem{definition}{Definition}[section]
\newtheorem{lemma}{Lemma}[section]
\newtheorem{claim}{Claim}[section]
\newtheorem{problem}{Problem}[section]
\newtheorem{remark}{Remark}[section]
\numberwithin{equation}{section}
\begin{document}

\title{Invariant random subgroups and action versus representation maximality}
\author{Peter J. Burton and Alexander S. Kechris}
\date{}
\maketitle

\renewcommand{\thefootnote}{}

\footnote{Research partially
supported by NSF Grant DMS-1464475}


\renewcommand{\thefootnote}{\arabic{footnote}}
\setcounter{footnote}{0}

\section{Introduction}
Let $G$ be a countably infinite group and $(X,\mu)$ a standard non-atomic probability space. We denote by $A(G,X, \mu)$ the space of measure preserving actions of $G$ on $(X,\mu)$ with the weak topology. If $\mathbf{a}, \mathbf{b} \in A(G,X, \mu)$, we say that $\mathbf{a}$ is $\textbf{weakly contained}$ in $\mathbf{b}$, in symbols $\mathbf{a}\preceq  \mathbf{b}$, if $\mathbf{a}$ is in the closure of the set of isomorphic copies of 
$\mathbf{b}$ (i.e., it is in the closure of the orbit of $\mathbf{b}$ under the action of the automorphism group of $(X,\mu)$ on $A(G,X, \mu)$; see \cite{K}). 
We say that $\mathbf{a} \in A(G,X, \mu)$ is $\textbf{action-maximal}$ if for all $\mathbf{b} \in A(G,X, \mu)$ we have $\mathbf{b}\preceq \mathbf{a}$. Such $\mathbf{a}$
exist by a result of Glasner-Thouvenot-Weiss, Hjorth, see \cite[Theorem 10.7]{K}).

Now let $H$ be a separable, infinite-dimensional Hilbert space and denote by $\textrm{Rep}(G,H)$ the space of unitary representations of $G$ on $H$ with the weak topology (see \cite[Appendix H]{K}). For $\pi, \rho \in \textrm{Rep}(G,H)$ we denote by $\pi\preceq\rho$ the usual relation of $\textbf{weak containment}$ of representations (see \cite{BHV}, \cite[Appendix H]{K}). We say that $\pi\in \textrm{Rep}(G,H)$ is $\textbf{representation-maximal}$ if for all $\rho \in \textrm{Rep}(G,H)$ we have $\rho\preceq \pi$. It is easy to check that such $\pi$ exist.

For any action $\mathbf{a}\in A(G,X, \mu)$, let $\kappa^{\mathbf{a}}$ be the associated representation on $L^2 (X, \mu)$, called the $\textbf{Koopman representation}$, and by $\kappa_0^{\mathbf{a}}$ its restriction to the orthogonal of the constant functions (see \cite[page 66]{K}). Then we have 
\[
\mathbf{a}\preceq  \mathbf{b} \implies \kappa_0^{\mathbf{a}}\preceq \kappa_0^{\mathbf{b}}
\]
but the converse fails, see \cite[pages 66 and 68]{K} and also \cite[page 155]{CK} for examples. However in all these examples the actions $\mathbf{a}, \mathbf{b}$ were not both ergodic and this led to the following question.

\begin{problem}\label{prob}
If $\mathbf{a}, \mathbf{b} \in A(G,X, \mu)$ are free, ergodic, does $\kappa_0^{\mathbf{a}}\preceq \kappa_0^{\mathbf{b}}$ imply $\mathbf{a}\preceq  \mathbf{b}$? 
\end{problem}
We provide a negative answer below. The proof is based on a result about invariant random subgroups of $G= \mathbf{F}_\infty$, the free group on a countably infinite set of generators, which might be of independent interest.

If $I$ is a countable set and $\alpha$ is an action of a countable group $G$ on $I$, we will write $\mathbf{s}_\alpha$ for the corresponding $\textbf{generalized shift action}$ on $2^I$with the usual product measure, given by $(\mathbf{s}_\alpha(g) \cdot f)(i) = f(\alpha(g)^{-1} \cdot i)$. If $I = G/H$, for some $H \leq G$, we will write $\tau_{G/H}$ for the left-translation action of $G$ on $G/H$ and $\mathbf{s}_{G/H}$ instead of $\mathbf{s}_{\tau_{G/H}}$. If $H$ is trivial, we write $\mathbf{s}_G$ instead of $\mathbf{s}_{G/H}$.

We also let $\lambda_\alpha$ be the representation on $\ell^2(I)$ given by $(\lambda_\alpha(g) \cdot f)(i) = f(\alpha(g)^{-1} \cdot i)$. Note that $\lambda_{\tau_{G/H}}$ is the usual $\textbf{quasi-regular representation}$ of $G$ on $\ell^2(G/H)$, which we will denote by $\lambda_{G/H}$.

We call a subgroup $H\leq G$ with $[G:H]=\infty$ $\textbf{action-maximal}$ if $ \mathbf{s}_{G/H}$ is action-maximal and $\textbf{representation-maximal}$ if 
 $\lambda_{G/H}$ is representation-maximal. It was shown in \cite{K1} that there are $H$ which are action-maximal and also $H$ which are representation-maximal, for any non-abelian free group $G$.

An $\textbf{invariant random subgroup (IRS)}$ of $G$ is a probability Borel measure on $\textrm{Sub}(G)$, the compact space of subgroups of $G$, which is invariant under the (continuous) action of $G$ on $\textrm{Sub}(G)$ by conjugation. Denote by $\mathcal{M}_G\subseteq \textrm{Sub}(G)$ the set of all $H\leq G$ that are both action-maximal and representation-maximal. We show the following:


\begin{theorem} \label{thm1} Let $G= \mathbf{F}_\infty$. Then there exists an IRS of $G$ which is supported by $\mathcal{M}_G$.
 \end{theorem}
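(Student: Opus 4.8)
The plan is to realize the required IRS as the distribution of the point stabilizer of a single, carefully randomized transitive action of $G=\mathbf F_\infty$ on a countable set, designed so that the stabilizer lies in $\mathcal M_G$ almost surely. The key reduction is that I do not need to verify maximality against \emph{all} actions and representations directly: by \cite{K1} there exist an action-maximal subgroup $K_1$ and a representation-maximal subgroup $K_0$, so $\mathbf s_{G/K_1}$ weakly contains every action and $\lambda_{G/K_0}$ weakly contains every representation. Hence, by transitivity of $\preceq$, it suffices to produce a conjugation-invariant random $H$ with
\[
\lambda_{G/H}\succeq\lambda_{G/K_0}\qquad\text{and}\qquad \mathbf s_{G/H}\succeq\mathbf s_{G/K_1}
\]
almost surely. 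Note that co-amenability of $H$ is then forced, since both $\lambda_{G/K_0}$ and $\mathbf s_{G/K_1}$ weakly contain the trivial representation/action, so a posteriori $\lambda_{G/H}\succeq\mathbf 1_G$, i.e. $G\curvearrowright G/H$ must be amenable; the construction will have to supply this.

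A deterministic choice cannot work. If $H$ is normal, then $\delta_H$ is conjugation-invariant but $\lambda_{G/H}$ factors through $G/H$ and so cannot weakly contain representations that do not, whence $H\notin\mathcal M_G$; and the conjugacy class of a non-normal $H$ need not carry any invariant probability measure. One is therefore forced to build a genuinely diffuse conjugation-invariant law on $\mathrm{Sub}(G)$. I will obtain it as the law of $H=\mathrm{Stab}_G(0)$ for a random transitive action $G\curvearrowright\mathbb Z$, the conjugation-invariance of the resulting IRS being exactly the mass-transport (unimodularity) property of the associated random rooted Schreier graph, which I will verify for the model below.

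For the construction I exploit that $\mathbf F_\infty$ has infinitely many free generators $x_1,x_2,\dots$, which lets me reconcile the two competing demands — co-amenability, which wants the Schreier graph Følner and hence ``small'', and the domination above, which wants it ``rich''. Let $x_1$ act on $\mathbb Z$ as the shift $n\mapsto n+1$, so that the orbit of $0$ is all of $\mathbb Z$ and $G/H\cong\mathbb Z$; for $i\ge 2$ let $x_i$ act as a random permutation of $\mathbb Z$ of bounded displacement, the randomness coming from i.i.d.\ site data and the whole recipe invariant under the $\mathbb Z$-shift. Bounded displacement guarantees that the intervals $[-N,N]$ remain Følner for every fixed generator, so the action is amenable and $H$ is co-amenable; meanwhile the i.i.d.\ local data, distributed over infinitely many generators with controlled but growing displacement, should make every finite labelled Schreier pattern appear along $\mathbb Z$ with positive density. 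Shift-invariance of the recipe is arranged so that the random rooted Schreier graph satisfies the mass-transport principle, yielding a bona fide IRS.

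The main obstacle is the richness clause: showing that almost surely the random Schreier graph locally reproduces the Schreier graphs of $K_0$ and $K_1$ well enough to force the two weak containments, \emph{while} retaining amenability. Concretely I expect to need a local--global / Ab\'ert--Weiss-type argument establishing that, almost surely, the random rooted Schreier graph contains arbitrarily good almost-invariant finite copies of the coset patterns of $K_0$ and $K_1$, and then to transfer these copies to weak containment of representations (via approximation of matrix coefficients of $\lambda_{G/K_0}$) and of actions (via the weak topology on $A(G,X,\mu)$, using $\mathbf s_{G/H}\succeq\mathbf s_{G/K_1}$). The most delicate point is the representation side: verifying that the available local patterns genuinely recover $\|\,\cdot\,\|_{C^*(G)}$ along $\lambda_{G/K_0}$, so that representation-maximality — a statement about the full group $C^*$-algebra — really follows from combinatorial density in the random line. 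That verification, together with checking that the bounded-displacement randomness is simultaneously tame enough for amenability and free enough for this density, is where I anticipate the bulk of the work.
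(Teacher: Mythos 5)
Your proposal has the right skeleton --- reduce to the two maximal subgroups $K_0,K_1$ of \cite{K1} via transitivity of $\preceq$, observe that co-amenability of $H$ is forced and that the IRS must be diffuse, and manufacture the IRS as the law of the stabilizer of a basepoint in an amenable-but-rich transitive $G$-action carrying an invariance property --- and all of that matches the paper. But the entire mathematical content lies in the ``richness'' clause that you defer, and the specific model you propose appears structurally incapable of delivering it. There are two problems. First, the notion of appearance you would actually need is not ``the pattern occurs as a subconfiguration with positive density'' but the much stronger statement that, for each $n$, a prescribed finite \emph{transitive} $G_n$-action $\alpha_n$ (a finite approximation of the maximal action/representation) occurs as a genuinely $\beta(G_n)$-\emph{invariant} finite subset of the Schreier graph: it is this invariance that yields a factor map $\mathbf{s}_{\beta\upharpoonright G_n}\to\mathbf{s}_{\alpha_n\upharpoonright G_n}$ and an exact copy of $\lambda_{\alpha_n}$ inside $\lambda_\beta\upharpoonright G_n$, which is what drives both weak containments. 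With $x_1$ acting as the shift on $\mathbb{Z}$, no nonempty finite set is invariant under any generating set containing $x_1$, and with every $x_i$ of displacement at most $d_i$, the ball of radius $r$ with respect to any finite set of generators lies in an interval of length $O(r)$. The blocks $\alpha_n$ that must be reproduced are transitive $G_n$-actions on sets $V_n$ with $|V_n|\to\infty$ and with no control on diameter versus volume (they are cut out of a maximal action, so they must eventually weakly contain arbitrary finite $G_n$-actions, including ones far from linear growth); such blocks cannot embed, even approximately, in a graph all of whose balls are intervals. So ``tame enough for amenability and free enough for density'' is not merely delicate here --- in this model the two demands look mutually exclusive. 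The paper escapes the tension differently: it strings together exact, $G_{f(n)}$-invariant copies $W_n$ of the blocks along a line, using \emph{fresh} generators of $\mathbf{F}_\infty$ (one per link, plus sparse top cycles) to connect them, so that each finite exhaustion set $F_m$ is exactly $G_m$-invariant. Invariance of the limiting empirical stabilizer distribution is then a two-line computation (Lemma \ref{lem1}) rather than a mass-transport argument, and a density function $f$ (Lemma \ref{lem7}) ensures the good blocks carry $1-\epsilon$ of the mass while guaranteeing that from a typical basepoint one can reach a copy of every $\alpha_n$ within a fixed word ball.

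Second, even granting that the patterns appear invariantly, the passage to representation-maximality is not just a matter of ``recovering the $C^*$-norm from combinatorial density'': the finite approximations only give $\lambda_\beta\oplus\infty 1_G$ converging to $\lambda_{G/K_0}$ in the Zimmer sense, and one must strip off the trivial summand. The paper does this by combining density of irreducible representations, the equivalence of $\preceq$ and $\preceq_Z$ for irreducibles, and an ultrapower argument showing an irreducible subrepresentation of $\lambda_\beta^*\oplus\eta^*$ must be orthogonal to the trivial part. Your writeup correctly flags this step as the hard one but supplies no mechanism for it. As it stands the proposal is a research plan whose two load-bearing steps are missing, and whose chosen model (bounded-displacement decorations of the $\mathbb{Z}$-shift) would have to be abandoned in favor of something with unbounded local blocks before either step could be carried out.
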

 
 Using this and the result of Dudko-Grigorchuk \cite[Proposition 8]{DG}, we then prove the following:

\begin{theorem} \label{thm2}

Let $G= \mathbf{F}_\infty$. Then there exists a free, ergodic  $\mathbf{a} \in A(G,X, \mu)$ such that $\mathbf{a}$ is not action-maximal but $ \kappa_0^\mathbf{a}$ is representation-maximal. 
\end{theorem}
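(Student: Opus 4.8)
The plan is to build the desired action $\mathbf{a}$ as the Koopman-side realization of the IRS produced by Theorem \ref{thm1}, exploiting that an IRS is a measure on $\mathrm{Sub}(G)$ and that each subgroup in its support is simultaneously action- and representation-maximal. Concretely, let $\nu$ be the IRS supported by $\mathcal{M}_G$ from Theorem \ref{thm1}. I would first recall the standard construction that associates to an IRS $\nu$ an action, namely the \emph{coinduced} or \emph{randomized generalized shift} action: integrate the generalized shift actions $\mathbf{s}_{G/H}$ over $H \sim \nu$, producing a single measure preserving action $\mathbf{a}$ of $G$ on a product space $(X,\mu)$ in which the stabilizer of a random point has distribution $\nu$. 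Because $\nu$-almost every $H$ has infinite index and is nontrivial, $\mathbf{a}$ is free only if the construction is arranged correctly; here the randomization must be set up (e.g. via a free ergodic extension, or by taking a product with a free action and arguing that the stabilizer distribution is still governed by $\nu$) so that the resulting action is free and ergodic. This is where I would invoke \cite[Proposition 8]{DG} of Dudko-Grigorchuk, which is precisely the tool that converts an IRS into a concrete action whose Koopman representation decomposes as an integral of the quasi-regular representations $\lambda_{G/H}$.

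Second, I would compute the two relevant invariants. On the representation side, the Koopman representation $\kappa_0^{\mathbf{a}}$ (or $\kappa^{\mathbf{a}}$) decomposes, via the Dudko-Grigorchuk description, as a direct integral $\int \lambda_{G/H}\, d\nu(H)$ up to the constant-functions correction. Since $\nu$-almost every $H$ is representation-maximal, each $\lambda_{G/H}$ is representation-maximal, and a direct integral that contains representation-maximal fibers on a positive-measure set is itself representation-maximal (because weak containment of representations is inherited by direct integrals containing a weakly-universal piece). Hence $\kappa_0^{\mathbf{a}}$ is representation-maximal. On the action side, I must show $\mathbf{a}$ is \emph{not} action-maximal. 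The natural route is to argue that action-maximality is a much more rigid property than its representation shadow: the weak-containment class of $\mathbf{a}$ is constrained by the IRS $\nu$, and since $\nu$ is a genuinely nontrivial IRS (it is supported on nontrivial infinite-index subgroups, not on the trivial subgroup), the action $\mathbf{a}$ carries asymptotic invariants — for instance the distribution of finite configurations of stabilizers, or a nontrivial cost/spectral gap type obstruction — that an action-maximal action cannot have. Equivalently, an action-maximal action must weakly contain every action, including actions whose IRS differs from $\nu$, and one can detect the discrepancy at the level of the point-stabilizer distribution, which is a weak-containment invariant in the appropriate sense.

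The main obstacle, and the step requiring the most care, is establishing non-action-maximality cleanly, because weak containment does not preserve the IRS exactly — it only respects its closure-type behavior. I would therefore look for a finitary or statistical invariant of $\mathbf{a}$ that is monotone under weak containment and that distinguishes $\mathbf{a}$ from an action-maximal action. A promising candidate is the observation that an action-maximal action must weakly contain the action with \emph{trivial} stabilizers together with all others, forcing its stabilizer IRS to be, in a measurable sense, as large as possible; but our $\mathbf{a}$ has stabilizer distribution exactly $\nu$ concentrated on proper nontrivial subgroups, and this positivity of nontrivial stabilizers is incompatible with the weak closure reaching actions that demand trivial stabilizers on a definite scale. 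I would make this rigorous by selecting a suitable continuous function on $\mathrm{Sub}(G)$, pushing forward $\nu$, and showing the resulting numerical invariant cannot attain the value forced by action-maximality. The freeness and ergodicity claims are comparatively routine: freeness follows from arranging a free auxiliary factor in the randomization so that the global action is free even though individual shift fibers are not, and ergodicity follows from the ergodicity of the $G$-action on $(\mathrm{Sub}(G),\nu)$ combined with the relative ergodicity of the shift fibers, via a standard ergodic-decomposition argument.
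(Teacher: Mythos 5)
Your construction of $\mathbf{a}$ and your argument for representation-maximality track the paper reasonably well: the paper takes an ergodic component $\mathbf{b}$ of the $\theta$-random Bernoulli shift over the IRS $\theta$ of Theorem \ref{thm1}, applies Dudko--Grigorchuk to get $\lambda_{G/H_y}\preceq\kappa_0^{\mathbf{b}}$ for almost every stabilizer $H_y$ (so $\kappa_0^{\mathbf{b}}$ weakly contains a representation-maximal representation and is therefore representation-maximal), and then passes to $\mathbf{a}=\mathbf{b}\times\mathbf{s}_G$ to secure freeness and ergodicity. Your description of \cite[Proposition 8]{DG} as a device that ``converts an IRS into a concrete action'' is slightly off --- it is a weak containment statement about the Koopman representation of a given action with given stabilizers, not a construction --- but the substance of this half is fine.

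The genuine gap is in your argument that $\mathbf{a}$ is \emph{not} action-maximal. You propose to detect non-maximality through the point-stabilizer distribution of $\mathbf{a}$, arguing that its IRS is ``concentrated on proper nontrivial subgroups'' and that this is incompatible with action-maximality. But you have already (correctly) arranged for $\mathbf{a}$ to be \emph{free} by taking a product with a free factor, so the stabilizer IRS of $\mathbf{a}$ is the point mass at the trivial subgroup --- exactly the same as that of every other free action, including the action-maximal ones produced by Glasner--Thouvenot--Weiss and Hjorth (which can likewise be taken free). No invariant read off from the stabilizer distribution of $\mathbf{a}$ itself can therefore distinguish $\mathbf{a}$ from an action-maximal action, and the ``statistical invariant monotone under weak containment'' you are hoping for does not exist in this form. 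The paper's actual argument is quite different and is the real content of the proof: assuming $\mathbf{a}$ is action-maximal, one finds a finitely generated nonamenable subgroup $N\leq G$ contained in $H_y$ for all $y$ in a positive-measure set (possible because representation-maximality of $\lambda_{G/H_y}$ forces $H_y$ to be nonamenable), restricts $\mathbf{a}$ to $N$ to get the action-maximal (for $N$) action $\mathbf{c}=(\mathbf{b}\upharpoonright N)\times\mathbf{s}_N$, observes that $\mathbf{s}_N$ is an ergodic component of $\mathbf{c}$ because $\mathbf{b}\upharpoonright N$ is trivial on that positive-measure set, and then invokes Tucker-Drob's theorem to conclude that every ergodic action of $N$ would be weakly contained in $\mathbf{s}_N$ --- contradicting the nonamenability of $N$ via \cite[Proposition 13.2]{K}. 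Without some argument of this kind (or another concrete obstruction to maximality that survives passage to a free action), your proof of the ``not action-maximal'' half does not go through.
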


Let $\mathbf{a}$ be as in Theorem \ref{thm2}. Since $G= \mathbf{F}_\infty$ does not have property (T), the free, ergodic actions $\mathbf{b} \in A(G,X, \mu)$ are dense in $A(G,X, \mu)$ (see \cite[Theorems 12.2 and 10.8]{K}), so there is a free, ergodic $\mathbf{b} \in A(G,X, \mu)$ such that $\mathbf{b}\npreceq \mathbf{a}$. On the other hand $\kappa_0^{\mathbf{b}}\preceq \kappa_0^{\mathbf{a}}$, thus we have a negative answer to Problem \ref{prob}.

We employ below the following notation:

If $\alpha$ is an action of $G$ on $I$ and $S \subseteq G$, we write $\alpha(S) = \{\alpha(g): g \in S\} \subseteq \mathrm{Sym}(I)$. For $G= \mathbf{F}_\infty$, we let $g_0, g_1, \dots $ be free generators of $G$ and let $G_n = \langle g_0, g_1, \dots , g_n\rangle\leq G$.


If $x$ is a real number, we write $\lfloor x \rfloor$ for the largest integer less than or equal to $x$. If $x, y$ are real numbers and $\epsilon > 0$, we write $x \approx_\epsilon y$ to mean $|x-y| < \epsilon$. Finally, $\mathbb{N} = \{0,1,2, \dots \}$ and $\mathbb{N}^+ = \{1,2,3, \dots \}$

$\textbf{For the rest of the paper}, G=\mathbf{F}_\infty$.

\section{Proof of Theorem \ref{thm1}}

The structure of the proof is as follows. In Subsection \ref{subsec1} we state three lemmas. Temporarily assuming these lemmas, in Subsection \ref{subsec2} we give the main argument establishing Theorem \ref{thm1}. Then in Subsection \ref{subsec3} we prove the lemmas from Subsection \ref{subsec1}.

Recall that for $\mathbf{a} \in A(G,X,\mu)$, we have $\mathbf{a} \preceq \mathbf{b}$ if and only if $\mathbf{a}$ lies in the closure of the isomorphic copies of $\mathbf{b}$. In particular, $\mathbf{b}$ is action-maximal if and only if the isomorphic copies of $\mathbf{b}$ are dense in $A(G,X,\mu)$. We will use these equivalences without comment several times in the sequel.

\subsection{Statements of lemmas} \label{subsec1}

The first lemma provides a general method for constructing invariant random subgroups.

\begin{lemma} \label{lem1} Let $\alpha$ be an action of $G$ on a countably infinite set $I$. Suppose there is an increasing sequence of non-empty finite subsets $(F_n)_{n=0}^\infty$ of $I$ such that $\bigcup_{n=0}^\infty F_n = I$ and $F_n$ is $\alpha(G_n)$-invariant. Let $\theta_n$ be the probability measure on $\mathrm{Sub}(G)$ given by the pushfoward of the uniform measure on $F_n$ under the map $v \mapsto \mathrm{stab}_\alpha(v)$ (where $\mathrm{stab}_\alpha(v)$ is the stabilizer of $v$ in $\alpha$). Let $\theta$ be any weak-star limit point of the $\theta_n$. Then $\theta$ is an invariant random subgroup of $G$. \end{lemma}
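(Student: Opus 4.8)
The plan is to show directly that the limit point $\theta$ is invariant under the conjugation action $c_g \colon H \mapsto gHg^{-1}$ of every $g \in G$ on $\mathrm{Sub}(G)$; since each $\theta_n$ is a probability measure on the compact metrizable space $\mathrm{Sub}(G)$ (a closed subset of $2^G$), any weak-star limit point $\theta$ is automatically a probability measure on $\mathrm{Sub}(G)$, so invariance under conjugation is all that remains. The whole argument rests on the elementary identity
\[
\mathrm{stab}_\alpha(\alpha(g)\cdot v) = g\,\mathrm{stab}_\alpha(v)\,g^{-1},
\]
valid for every $g \in G$ and $v \in I$, which says precisely that the map $\phi \colon v \mapsto \mathrm{stab}_\alpha(v)$ intertwines the action $\alpha$ of $G$ on $I$ with the conjugation action on $\mathrm{Sub}(G)$, i.e.\ $c_g \circ \phi = \phi \circ \alpha(g)$. (Here $\phi$ is trivially Borel, as $I$ is countable and discrete.)

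First I would fix $g \in G$ and establish exact invariance of $\theta_n$ under $c_g$ for all large $n$. Writing $u_n$ for the uniform probability measure on $F_n$, so that $\theta_n = \phi_* u_n$, the intertwining identity and functoriality of pushforward give
\[
(c_g)_* \theta_n = (c_g)_* \phi_* u_n = (c_g \circ \phi)_* u_n = (\phi \circ \alpha(g))_* u_n = \phi_*\big((\alpha(g))_* u_n\big).
\]
Now if $g \in G_n$, then by hypothesis $F_n$ is $\alpha(G_n)$-invariant, so $\alpha(g)$ restricts to a permutation of the finite set $F_n$ and hence preserves its uniform measure, giving $(\alpha(g))_* u_n = u_n$. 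Therefore $(c_g)_* \theta_n = \theta_n$ whenever $g \in G_n$.

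Next I would pass to the limit. Since $\bigcup_n G_n = G$, the fixed element $g$ lies in $G_n$ for all sufficiently large $n$, so $\theta_n$ is $c_g$-invariant for all large $n$. Choosing a subsequence $(\theta_{n_k})$ converging weak-star to $\theta$, and using that the conjugation action is continuous (so the induced map $(c_g)_*$ on probability measures is weak-star continuous), I obtain $(c_g)_* \theta_{n_k} \to (c_g)_* \theta$. But $(c_g)_* \theta_{n_k} = \theta_{n_k} \to \theta$ for all large $k$, and weak-star limits of probability measures on the compact metrizable space $\mathrm{Sub}(G)$ are unique, so $(c_g)_* \theta = \theta$. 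As $g$ was arbitrary, $\theta$ is conjugation-invariant, hence an IRS.

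The only point requiring care, and the mild obstacle in the argument, is that $\theta_n$ is $c_g$-invariant only for $n$ large enough that $g \in G_n$: the boundary effects of $F_n$ genuinely break invariance at the finitely many earlier stages, so one cannot invoke exact invariance of the $\theta_n$ uniformly. This is exactly why the statement passes to a limit point rather than asking the $\theta_n$ themselves to be invariant—each fixed $g$ is eventually accommodated, and continuity of conjugation together with uniqueness of weak-star limits promotes this eventual exact invariance to exact invariance of $\theta$.
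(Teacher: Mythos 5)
Your proof is correct and follows essentially the same route as the paper's: both arguments rest on the observation that $v \mapsto \mathrm{stab}_\alpha(v)$ intertwines $\alpha$ with the conjugation action, so that the $\alpha(G_n)$-invariance of $F_n$ makes $\theta_n$ exactly invariant under conjugation by elements of $G_n$, after which one passes to the weak-star limit. The paper packages this as an explicit verification that $\theta_n(gCg^{-1}) = \theta_n(C)$ on cylinder clopen sets $C$ together with an $\epsilon$-approximation of $\theta$ by $\theta_n$ on such sets, whereas you phrase it more globally as $(c_g)_*\theta_n = \theta_n$ for $g \in G_n$ plus weak-star continuity of $(c_g)_*$; the mathematical content is the same.
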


In order to state the second lemma, we need the following definition.

\begin{definition} Let $\alpha$ be an action of $G$ on a finite set $V$ and let $n$ be such that all $\alpha(g_k), k>n,$ act trivially. Let $\beta$ be an action of $G$ on a countably infinite set $I$. Let $Q \subseteq I$ be a finite set. We will say that $\alpha$ (relative to $n$) \textbf{appears in} $\beta$ \textbf{within} $Q$ if there is a $\beta(G_n)$-invariant set $W \subseteq Q$ and a bijection $\phi:V \to W$ such that $\phi(\alpha(g) \cdot v) = \beta(g) \cdot \phi(v)$ for all $v \in V$ and $g \in G_n$. We will say that $\alpha$ \textbf{appears in} $\beta$ if it appears within some finite subset of $I$. \end{definition}

Note that if $\alpha$ appears in $\beta$ as above, then $\mathbf{s}_{\alpha \upharpoonright G_n}$ is a factor of $\mathbf{s}_{\beta \upharpoonright G_n}$.

\begin{lemma} \label{lem2} There exists a sequence of finite sets $(V_n)_{n=1}^\infty$, with ${|V_n|}\to \infty$,  and actions $(\alpha_n)_{n=1}^\infty$ of $G$, where $\alpha_n$ acts transitively on $V_n$ so that all $g_k, k>n,$ act trivially in $\alpha_n$, such that if $\beta$ is a transitive action of $G$ on a countably infinite set and $\alpha_n$ (relative to $n$) appears in $\beta$ for each $n$, then $\mathbf{s}_\beta$ is action-maximal and $\lambda_\beta$ is representation-maximal.
 \end{lemma}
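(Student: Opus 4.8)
The plan is to reduce maximality over $G=\mathbf{F}_\infty$ to maximality over each finite-rank free subgroup $G_n$, and then to exploit that the appearance hypothesis forces the shifts (resp.\ quasi-regular representations) of the $\alpha_m$ to be factors (resp.\ subrepresentations) of $\mathbf{s}_\beta$ (resp.\ $\lambda_\beta$). The first point is that weak containment is finitary: for actions, $\mathbf{a}\preceq\mathbf{b}$ iff for every finite $S\subseteq G$, every finite partition and every $\epsilon>0$ the relevant $S$-statistics of $\mathbf{a}$ are matched within $\epsilon$ by those of $\mathbf{b}$, and every finite $S$ lies in some $G_n$; likewise for representations with matrix-coefficient statistics. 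Hence $\mathbf{s}_\beta$ is action-maximal over $G$ iff $\mathbf{s}_\beta\upharpoonright G_n$ is action-maximal over $G_n$ for every $n$, and symmetrically $\lambda_\beta$ is representation-maximal over $G$ iff $\lambda_\beta\upharpoonright G_n$ is representation-maximal over $G_n$ for every $n$ (using that every $G_n$-action, resp.\ $G_n$-representation, extends to $G$ by letting the remaining generators act trivially).

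Next, recall from the remark following the definition that if $\alpha_m$ (relative to $m$) appears in $\beta$, then $\mathbf{s}_{\alpha_m}\upharpoonright G_m$ is a factor of $\mathbf{s}_\beta\upharpoonright G_m$; the equivariant bijection onto a $\beta(G_m)$-invariant set $W$ also identifies $\ell^2(V_m)\cong\ell^2(W)$ as a $G_m$-subrepresentation, so $\lambda_{\alpha_m}\upharpoonright G_m$ is a subrepresentation of $\lambda_\beta\upharpoonright G_m$. Restricting to any $G_n$ with $n\le m$, and using that factors and subrepresentations are weakly contained, we get $\mathbf{s}_{\alpha_m}\upharpoonright G_n\preceq\mathbf{s}_\beta\upharpoonright G_n$ and $\lambda_{\alpha_m}\upharpoonright G_n\preceq\lambda_\beta\upharpoonright G_n$. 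Thus it suffices to build the $\alpha_n$ so that, for each fixed $n$, the weakly contained pieces $\mathbf{s}_{\alpha_m}\upharpoonright G_n$ ($m\ge n$) approximate a $G_n$-action-maximal action arbitrarily well, and the pieces $\lambda_{\alpha_m}\upharpoonright G_n$ approximate a $G_n$-representation-maximal representation arbitrarily well; a triangle-inequality argument on the finitary statistics then upgrades these approximations to genuine weak containments, showing $\mathbf{s}_\beta\upharpoonright G_n$ and $\lambda_\beta\upharpoonright G_n$ are maximal.

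To construct the $\alpha_n$ I would fix, using \cite{K1}, an action-maximal subgroup $H_a\le G$ (so $\mathbf{s}_{G/H_a}$ is action-maximal) and a representation-maximal subgroup $H_r\le G$ (so $\lambda_{G/H_r}$ is representation-maximal), and then alternate roles: for even $n$ choose $\alpha_n$ so that its shift approximates $\mathbf{s}_{G/H_a}\upharpoonright G_n$ ever more finely as $n\to\infty$, and for odd $n$ choose $\alpha_n$ so that its quasi-regular representation approximates $\lambda_{G/H_r}\upharpoonright G_n$ ever more finely. Since $\mathbf{s}_{G/H_a}$ is action-maximal over $G$, its restriction to each $G_k$ is action-maximal over $G_k$, so the even pieces witness action-maximality of $\mathbf{s}_\beta\upharpoonright G_k$ for every $k$, and symmetrically the odd pieces witness representation-maximality of $\lambda_\beta\upharpoonright G_k$; the remaining (harmless) structure of each $\alpha_n$ is simply also weakly contained and plays no further role. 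Together with the previous two paragraphs this gives the conclusion, and $|V_n|\to\infty$ is arranged by hand.

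The main obstacle is the approximation step: that the restriction to the finite-rank free group $G_n$ of an infinite transitive action $\tau_{G/H}$ can be approximated by finite transitive $G_n$-actions in such a way that both the associated shift and the associated quasi-regular representation converge weakly. This is a local (Benjamini--Schramm) approximation of the Schreier graph of $G_n\curvearrowright G/H$ by finite Schreier graphs, available because $G_n$ is free (hence residually finite); one then checks that convergence of the finite-radius labelled neighbourhood statistics forces convergence of the finite-$S$ partition statistics of the shift and of the finite-$S$ matrix-coefficient statistics of the quasi-regular representation. Two bookkeeping points must be handled: the finite approximants must be made transitive --- link the orbits produced by the approximation using one of the generators $g_0,\dots,g_n$, noting that the density of the added ``seam'' tends to $0$ and so does not affect the limiting statistics --- and each $\alpha_n$ must have $g_k$ act trivially for $k>n$, which is automatic once $\alpha_n$ is taken as a finite $G_n$-action extended to $G$ by the trivial action of the remaining generators.
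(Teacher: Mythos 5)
Your overall architecture is sound and, in its endgame, genuinely different from the paper's. Both proofs start from the same inputs (\cite[Theorems 5.1 and 5.5]{K1}), both obtain the finite approximants $\alpha_n$ by cutting a maximal infinite transitive action down to a finite window, both use the factor/subrepresentation consequence of ``appears in,'' and both interlace the two sequences. Where you diverge is in how the approximation is converted into maximality of $\mathbf{s}_\beta$ and $\lambda_\beta$. The paper insists on realizing everything inside $A(G,X,\mu)$ and $\mathrm{Rep}(G,H)$, so the finite shifts enter as $\mathbf{s}_{\alpha_n}\times\boldsymbol\iota$ and the finite quasi-regular representations as $\lambda_{\alpha_n}\oplus\infty 1_G$; this forces two nontrivial ``padding removal'' arguments at the end (Tucker-Drob's theorem on ergodic components plus density of ergodic actions for the action case; the Ab\'ert--Elek ultrapower characterization plus irreducibility to kill the trivial summand for the representation case). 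You instead work directly with the finitary characterizations of $\preceq$ and their transitivity, letting the atomic/finite-dimensional objects appear only as intermediate terms in a weak-containment chain $\mathbf{s}_\alpha\upharpoonright G_n\preceq\mathbf{s}_{\alpha_m}\upharpoonright G_n\preceq\mathbf{s}_\beta\upharpoonright G_n$. Since weak containment (in the ordinary, non-Zimmer sense used in the definitions of maximality) is finitary, transitive, and implied by being a factor or subrepresentation, this chain is legitimate, and it buys you a cleaner conclusion: no $\boldsymbol\iota$, no $\infty 1_G$, hence no Tucker-Drob and no ultrapowers. Your reduction of $G$-maximality to $G_n$-maximality for all $n$ is also correct for $\mathbf{F}_\infty$ (every $G_n$-action or representation extends by letting the remaining generators act trivially, and restriction preserves $\preceq$).

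The one step you should repair is the construction of the $\alpha_n$ themselves. You invoke Benjamini--Schramm approximation of the Schreier graph of $G_n\actson G/H$ and worry about the ``seam'' having vanishing density. That is the wrong framework here: there is no canonical root distribution on the infinite transitive Schreier graph making BS convergence meaningful, these Schreier graphs are typically nonamenable (indeed $H$ cannot be amenable if $\lambda_{G/H}$ is representation-maximal), so truncations do \emph{not} have small boundary, and residual finiteness of $G_n$ approximates its Cayley graph rather than an arbitrary Schreier graph. Fortunately none of this is needed. What your argument actually requires is only that for each finite window $T$ of $G/H$ and each $n$ there is a finite transitive $G_n$-action containing an exact equivariant copy of the partial action on $T\cup G_n^{\pm 1}T$; this is obtained, as in the paper, by restricting the generators to a large finite piece, extending the resulting partial injections to permutations of a finite superset (choosing the extensions so as to make the action transitive), and letting the remaining generators act trivially. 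The fixed-window cylinder statistics of the shift and the matrix coefficients at finitely supported vectors are then reproduced \emph{exactly}, with no smallness-of-boundary requirement whatsoever. With that substitution your proof goes through.
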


Fix a sequence of finite sets $V_n$ and actions $\alpha_n$ of $G$ on $V_n$, $n\geq 1$, as in Lemma \ref{lem2}. Given $f: \mathbb{N} \to \mathbb{N^+}$, $m>0$, write $C_m(f) = \sum_{n=0}^{m-1} (|V_{f(n)}|+1).$ We will need a function $f$ with the following properties. 

\begin{lemma} \label{lem7} There exists a function $f:\mathbb{N} \to \mathbb{N}^+$ such that:

\begin{enumerate}[(i)] \item for every $n\geq 1$ there exists positive integer $K=K_n$ such that for all $j$ there is $l$ with $\left \lfloor \frac{j}{K} \right \rfloor =  \left \lfloor \frac{l}{K} \right \rfloor$ and $f(l) = n$, \label{item1.2}
\item for every $\epsilon > 0$, there exists $t>0$, such that for all $m>0$ we have  \[ \frac{1}{C_m(f)} \sum_{n =1}^t (|V_n|+1) \cdot \bigl \vert \bigl \{j \in \{0,\ldots, m-1 \}: f(j) = n \bigr \} \bigr \vert > 1- \epsilon. \label{eq4.0} \] \label{item1.3}
\end{enumerate}  \end{lemma}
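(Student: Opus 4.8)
The plan is to realize $f$ via a disjoint covering of $\mathbb{N}$ by arithmetic progressions whose moduli grow fast enough to dominate the sizes $|V_n|$. Write $w_n = |V_n|+1$. I will choose radices $a_1, a_2, \dots \geq 2$, set $M_0 = 1$ and $M_n = a_1 a_2 \cdots a_n$, and arrange that $M_{n-1} \geq 2^n w_n$ for every $n \geq 2$; this is possible by choosing the $a_n$ recursively and as large as needed. Put $R_n = \{j \in \mathbb{N} : M_n \mid j\}$ and $S_n = R_{n-1} \setminus R_n$. The sets $R_n$ are nested with $\bigcap_n R_n = \{0\}$, so the $S_n$ (for $n \geq 1$) are pairwise disjoint and $\bigcup_{n \geq 1} S_n = \mathbb{N} \setminus \{0\}$. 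I then define $f(j) = n$ for $j \in S_n$ and $f(0) = 1$, obtaining a function $f : \mathbb{N} \to \mathbb{N}^+$.

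For (i), the key observation is that $S_n$ is exactly the union of the $a_n - 1 \geq 1$ residue classes $j \equiv k M_{n-1} \pmod{M_n}$ with $1 \leq k \leq a_n - 1$. Consequently every block $[qM_n, (q+1)M_n)$ of $M_n$ consecutive integers contains an element of $S_n$, so taking $K_n = M_n$ gives, for each $j$, some $l$ in the same block with $f(l) = n$.

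For (ii), I will combine two bounds valid for every $m$. Since each summand in $C_m(f) = \sum_{j=0}^{m-1}(|V_{f(j)}| + 1)$ is at least $1$, we have the trivial lower bound $C_m(f) \geq m$. On the other hand, for $n \geq 2$ we have $S_n \subseteq \{M_{n-1}, 2M_{n-1}, \dots\}$, so $N_n(m) := |\{j < m : f(j) = n\}|$ is at most the number of positive multiples of $M_{n-1}$ below $m$, whence $N_n(m) \leq m/M_{n-1}$. Regrouping the definition of $C_m(f)$ by value gives $C_m(f) = \sum_{n \geq 1} w_n N_n(m)$, so the expression in (ii) equals $1$ minus a tail ratio, and for any $t \geq 1$ and any $m$,
\[ \frac{1}{C_m(f)} \sum_{n > t} w_n N_n(m) \;\leq\; \sum_{n > t} \frac{w_n}{M_{n-1}} \;\leq\; \sum_{n > t} 2^{-n} \;=\; 2^{-t}, \]
using $\frac{N_n(m)}{C_m(f)} \leq \frac{1}{M_{n-1}}$ termwise together with $M_{n-1} \geq 2^n w_n$. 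Given $\epsilon > 0$, I choose $t$ with $2^{-t} < \epsilon$; the displayed bound then yields (ii) uniformly in $m$.

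The one genuine subtlety is that (ii) demands control uniform in $m$, not merely in the limit: a naive prescription of asymptotic densities fails because large symbols appearing early, or accumulated additive errors over the infinitely many values, could spoil the ratio at small or intermediate $m$. This is exactly what the two exact bounds $C_m(f) \geq m$ and $N_n(m) \leq m/M_{n-1}$ circumvent, since each holds for all $m$ and their quotient is the $m$-independent tail $\sum_{n>t} w_n/M_{n-1}$. I expect the verification of the residue-class description of $S_n$ and this uniform tail estimate to be the only points needing real care; the disjointness and covering property of the $S_n$, and the recursive choice of the $a_n$, are routine.
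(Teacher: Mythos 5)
Your proof is correct. It follows the same overall strategy as the paper's --- partition $\mathbb{N}$ into sets $f^{-1}(n)$ that meet every aligned block of length $K_n$, with the $K_n$ a product of rapidly growing integers chosen to dominate the weights $|V_n|+1$ --- but both the construction and the density estimate are carried out differently. The paper builds the sets $A_n=f^{-1}(n)$ inductively, placing exactly one point of $A_n$ in each interval $[iK_n,(i+1)K_n)$ while dodging the previously constructed $A_2,\dots,A_{n-1}$; this requires a counting argument (the condition $\sum_{n\ge 2} 1/(a_2\cdots a_n)<1/3$) to guarantee room remains, plus a designated set $A_1'$ of even numbers to handle $n=1$. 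Your residue-class description $S_n=R_{n-1}\setminus R_n$ makes disjointness, covering, and syndeticity automatic, eliminating the induction. For the uniform bound in (ii), the paper compares $|A_n\cap m|$ against the single preceding term $|A_{n-1}\cap m|\cdot g(n-1)$ and needs a two-case analysis ($m\ge K_n$ versus $m<K_n$, the latter relying on the extra stipulation that the first point of $A_n$ lies beyond $K_n/3$); your pairing of the trivial lower bound $C_m(f)\ge m$ with the exact upper bound $N_n(m)\le m/M_{n-1}$ makes the tail ratio at most $\sum_{n>t}w_n/M_{n-1}\le 2^{-t}$ with no case split at all. The trade-off is negligible --- your moduli $M_n$ may need to grow slightly faster since you discard the factor $m$ entirely rather than exploiting the large count of $A_{n-1}$ --- and on balance your argument is shorter and easier to verify.
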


\subsection{Main argument} \label{subsec2}

Let, for $n\geq 1$, $\alpha_n$ and $V_n$ be as in Lemma \ref{lem2} and let $f$ be as in Lemma \ref{lem7}. Choose a pairwise disjoint sequence of finite sets $W_n$, $n\geq 0$, such that $|W_n| = |V_{f(n)}|$. Define an action of $\alpha$ of $G$ on $\bigcup_{n=0}^\infty W_n$ by identifying $W_n$ with $V_{f(n)}$ and letting $G$ act on $W_n$ according to $\alpha_{f(n)}$. Let $\{u_n\}_{n=0}^\infty$ be an enumeration of a countably infinite set disjoint from the $W_n$. We now modify $\alpha$ to obtain a new action $\beta$ of $G$ on $I = \left( \bigcup_{n=0}^\infty W_n \right) \cup \{u_n\}_{n=0}^\infty$. We will have that $\beta(g_k)$ agrees with $\alpha(g_k)$ on $W_n$ when $k \in \{0,\ldots,f(n)\}$. 

For each $n$, choose a point $w_n \in W_n$ and let $\beta(g_{f(n) + 1})$ transpose $w_n$ with $u_n$. Let $(l_n)_{n=0}^\infty$ be a strictly increasing sequence of indices such that $\max(n,f(0),\ldots,f(n+1)) +1 < l_n $. Let $\beta(g_{l_n})$ transpose $w_n$ and $w_{n+1}$.

Fix $n \geq 1$. We now define how $\beta(g_n)$ acts on $\{u_j\}_{j=0}^\infty$. For $k \in \mathbb{N}$, consider the discrete interval \[ D_{n,k} = \{k \cdot n,\ldots,(k+1) \cdot n - 1\}.\] We would like to have $\beta(g_n)$ make a cycle out $\{u_j, j \in D_{n,k}\}$ for each $k$. Unfortunately, we cannot achieve that exactly since there may by some $j \in D_{n,k}$ for which $f(j) +1 = n $, and in this case we will have already used $g_n$ to link $W_j$ with $u_j$. Thus for each $k$, we will let $\beta(g_n)$ make a cycle out of the set \[ \{u_j: j \in D_{n,k} \mbox{ and } f(j) +1 \neq n\}, \] making no modification to the action of $\beta(g_n)$ on those $u_j$ for which $f(j)+1 = n$. We will call these cycles the top cycles of $\beta(g_n)$. We have the following picture of $\beta$, where $n=f(3)+1=6$ and we consider the interval $D_{6,0}$.
\begin{center}

\vspace{0.25 in}

\begin{tikzpicture}

\filldraw [fill=gray] (0.0,0) circle [radius=0.5 cm];
\node at (0.0,0) {$W_0$};

\draw[line width = 0.4 mm ,<->] (0,0.6) -- (0,1.8);
\node[left] at (0,1) {$g_{f(0)+1}$};

\draw[line width = 0.4 mm, <->] (0.6,0) -- (1.4,0);
\node[below] at (1,0) {$g_{l_0}$};

\fill (0.0,2) circle (2pt);
\node[left] at (0.0,2) {$u_0$};

\draw[blue,line width = 0.4 mm ,->] (0.2,2.2) to [bend left = 45] (1.8,2.2);

\filldraw [fill=gray] (2.0,0) circle [radius=0.5 cm];
\node at (2.0,0) {$W_1$};

\draw[line width = 0.4 mm ,<->] (2,0.6) -- (2,1.8);
\node[left] at (2,1) {$g_{f(1)+1}$};

\draw[line width = 0.4 mm, <->] (2.6,0) -- (3.4,0);
\node[below] at (3,0) {$g_{l_1}$};

\fill (2.0,2) circle (2pt);
\node[left] at (2.0,2) {$u_1$};

\draw[blue,line width = 0.4 mm ,->] (2.2,2.2) to [bend left = 45] (3.8,2.2);

\filldraw [fill=gray] (4.0,0) circle [radius=0.5 cm];
\node at (4.0,0) {$W_2$};

\draw[line width = 0.4 mm ,<->] (4,0.6) -- (4,1.8);
\node[left] at (4,1) {$g_{f(2)+1}$};

\draw[line width = 0.4 mm, <->] (4.6,0) -- (5.4,0);
\node[below] at (5,0) {$g_{l_2}$};

\fill (4.0,2) circle (2pt);
\node[left] at (4.0,2) {$u_2$};

\draw[blue,line width = 0.4 mm ,->] (4.2,2.2) to [bend left = 45] (7.8,2.2);

\node[blue,above] at (5,3) {$g_6$};

\filldraw [fill=gray] (6.0,0) circle [radius=0.5 cm];
\node at (6.0,0) {$W_3$};

\draw[line width = 0.4 mm ,<->] (6,0.6) -- (6,1.8);
\node[left] at (6,1) {$g_6$};

\draw[line width = 0.4 mm, <->] (6.6,0) -- (7.4,0);
\node[below] at (7,0) {$g_{l_3}$};

\fill (6.0,2) circle (2pt);
\node[left] at (6.0,2) {$u_3$};

\filldraw [fill=gray] (8.0,0) circle [radius=0.5 cm];
\node at (8.0,0) {$W_4$};

\draw[line width = 0.4 mm ,<->] (8,0.6) -- (8,1.8);
\node[left] at (8,1) {$g_{f(4)+1}$};

\draw[line width = 0.4 mm, <->] (8.6,0) -- (9.4,0);
\node[below] at (9,0) {$g_{l_4}$};

\fill (8.0,2) circle (2pt);
\node[left] at (8.0,2) {$u_4$};

\draw[blue,line width = 0.4 mm ,->] (8.2,2.2) to [bend left = 45] (9.8,2.2);

\draw[blue,line width = 0.4 mm ,<-] (0.0,2.2) to [bend left = 45] (10,2.2);

\filldraw [fill=gray] (10.0,0) circle [radius=0.5 cm];
\node at (10.0,0) {$W_5$};

\draw[line width = 0.4 mm ,<->] (10,0.6) -- (10,1.8);
\node[left] at (10,1) {$g_{f(5)+1}$};


\fill (10.0,2) circle (2pt);
\node[left] at (10.0,2) {$u_5$};





(13.8,2.2);








\end{tikzpicture}
\end{center}
\vspace{0.5 in}

Finally $\beta$ is defined trivially for all other points. Clearly $\beta$ acts transitively. Write for $m>0$, $\left( \bigcup_{k=0}^{m-1} W_k \right) \cup \{u_0,\ldots,u_{m-1}\} = T_m$ and for $m\geq 0$, $T_{m!} = F_m$. Thus $F_m$ is invariant under $\beta(G_m)$. For each $m$, define a measure $\theta_m$ on $\mathrm{Sub}(G)$ be letting $\theta_m$ be the pushforward of the uniform measure on $F_m$ under the map $v \mapsto \mathrm{stab}_\beta(v)$. Let $\theta$ be a weak-star limit point of $\theta_m$. By Lemma \ref{lem1}, $\theta$ is an invariant random subgroup of $G$. 

We claim that $\theta$ is supported on $\mathcal{M}_G$. Let $(Q_k)_{k=0}^\infty$ be an increasing sequence of finite subsets of $G$ with $\bigcup_{k=0}^\infty Q_k = G$. For $H \leq G$, let $Q_k/H = \{gH: g \in Q_k\}$. Write, for $n\geq 1, k\in \mathbb{N}$, \[ A_{n,k} = \bigl \{H \leq G: \alpha_n \mbox{ appears in }\tau_{G/H} \mbox{ within } Q_k/H \bigr \}. \] By definition, if $H \in \bigcup_{k=0}^\infty A_{n,k}$, then $\alpha_n$ appears in $\tau_{G/H}$. Therefore by Lemma \ref{lem2}, we have \[ \bigcap_{n=1}^\infty \bigcup_{k=0}^\infty A_{n,k} \subseteq \mathcal{M}_G.\] Thus it suffices to show that for each $n\geq 1 $ we have $\sup_{k < \infty} \theta(A_{n,k}) =1$. Fix $n$ and $\epsilon > 0$. Since the set $A_{n,k}$ is clopen for each $k$, it is enough to show the following:

\begin{claim} \label{lem3} There is some $k \in \mathbb{N}$, such that for all $m>0$, we have $\theta_m(A_{n,k}) > 1- \epsilon$.\end{claim}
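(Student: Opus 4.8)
The plan is to compute $\theta_m(A_{n,k})$ as the fraction of points $v \in F_m$ whose $\beta$-stabilizer lies in $A_{n,k}$, and to show that for one suitably chosen $k$ this fraction exceeds $1-\epsilon$ for \emph{every} $m$. The starting observation is that for $v \in I$ with $H = \mathrm{stab}_\beta(v)$, the orbit map $gH \mapsto \beta(g)\cdot v$ is a $G$-equivariant bijection of $G/H$ onto $I$ conjugating $\tau_{G/H}$ to $\beta$ (as $\beta$ is transitive), and it carries $Q_k/H$ onto $\{\beta(g)\cdot v : g \in Q_k\}$. Hence $H \in A_{n,k}$ if and only if $\alpha_n$ appears in $\beta$ within $\{\beta(g)\cdot v : g \in Q_k\}$. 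Since each block $W_l$ with $f(l) = n$ is by construction a $\beta(G_n)$-invariant set on which $\beta \upharpoonright G_n$ is isomorphic to $\alpha_n$ (the only modifications of $\beta$ touching $W_l$ involve generators of index $> n$), it suffices to produce, for most $v$, a single such $W_l$ entirely contained in $\{\beta(g)\cdot v : g \in Q_k\}$.

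First I would fix, using property (\ref{item1.3}), an integer $t$ so that for every $m$ the points lying in the ``low'' blocks $W_j$ with $f(j) \le t$, together with their companions $u_j$, make up more than a $1-\epsilon$ fraction of $F_m$; indeed the left side of (\ref{item1.3}), read with $m!$ in place of $m$, counts exactly these points divided by $|F_m| = C_{m!}(f)$. I would designate these as the \emph{good} points and route each of them to a full copy of some $W_l$, $f(l) = n$, using only finitely many group elements independent of $m$ and $v$.

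The navigation is the technical heart. Choose $M$ a multiple of $K_n$ with $M > \max(t+1,\,n+1)$, where $K_n$ comes from (\ref{item1.2}). From a good point $v \in W_j$ I would: move inside $W_j$ to the marked point $w_j$ by a bounded element of $G_{f(j)} \subseteq G_t$ (transitivity of $\alpha_{f(j)}$); apply $g_{f(j)+1}$ (index $\le t+1$) to reach $u_j$; apply a power $g_M^{s}$ with $|s| \le M$ to move within the top cycle of $g_M$ on the block $D_{M,\lfloor j/M\rfloor}$ to some $u_l$ with $f(l) = n$; apply $g_{n+1}$ to reach $w_l \in W_l$; and finally sweep out all of $W_l$ with a bounded set $E_n \subseteq G_n$. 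An index $l$ in the same $M$-block as $j$ with $f(l) = n$ exists by (\ref{item1.2}) because $M$ is a multiple of $K_n$, and the choice $M > \max(t+1,n+1)$ guarantees that both $u_j$ and $u_l$ genuinely lie on the top cycle of $g_M$ (their exclusion would force $f(j)+1 = M$ or $f(l)+1 = n+1 = M$). All elements used are products from a finite set $S$ depending only on $n$ and $t$, so taking $k$ with $Q_k \supseteq S$ completes the reduction.

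The main obstacle is precisely this bounded-element routing: the backbone links $g_{l_j}$ joining consecutive blocks carry unbounded generator indices, so one cannot walk along the blocks directly. The resolution is to pass through the $u$-points and exploit that the top-cycle generator $g_M$ and the linking generators $g_{f(j)+1}, g_{n+1}$ all have indices bounded in terms of $n$ and $t$; the multiple-of-$K_n$ choice of $M$ simultaneously secures the window condition from (\ref{item1.2}) and sidesteps the finitely many index coincidences that would drop a needed $u$-point from a top cycle. Granting the routing, every good $v$ has $\mathrm{stab}_\beta(v) \in A_{n,k}$, so $\theta_m(A_{n,k})$ is at least the good fraction, which exceeds $1-\epsilon$ for all $m$ by the choice of $t$, giving Claim \ref{lem3}.
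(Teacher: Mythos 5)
Your proposal is correct and follows essentially the same route as the paper: fix $t$ via Lemma \ref{lem7}(\ref{item1.3}) so that the points of $W_j \cup \{u_j\}$ with $f(j)\leq t$ form a $(1-\epsilon)$-fraction of every $F_m$, then route each such point through $u_j$, along a top cycle to some $u_l$ with $f(l)=n$ in the same $K_n$-window, and into $W_l$, using a finite set of group elements independent of $m$ and $v$ (the paper's Claim \ref{cla1}). The single genuine divergence is in the top-cycle navigation: the paper uses words in the three generators $g_K, g_{2K}, g_{3K}$ (its set $S_3$) to dodge the skipped points, whereas you take one generator $g_M$ with $M$ a multiple of $K_n$ and $M>\max(t+1,n+1)$, so that neither endpoint $u_j$ nor $u_l$ can be skipped from the top cycle of $g_M$ (the intermediate skipped points being harmless since they are simply absent from the cycle); your variant is slightly cleaner and equally valid, since a multiple of $K_n$ still meets $f^{-1}(n)$ in every $M$-block by Lemma \ref{lem7}(\ref{item1.2}).
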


Let $t$ be large enough that Lemma \ref{lem7}(\ref{eq4.0}) holds for our chosen $\epsilon.$ We now define five finite subsets of $G$. 

\begin{itemize} \item Let $S_1 \subseteq G$ consist of $\{1_G\}$ together with every word in the generators $g_0,\ldots,g_t$ with length at most $\max_{1 \leq j \leq t} |V_j|$. If $f(j) \leq t$, this choice will allow us to pass between points in $W_j$ using an element of $S_1$.

\item Let $S_2 = \{1_G, g_0,\ldots,g_{t+1} \}$. If $f(j) \leq t$, this choice will allow us to pass to $u_j$ from some point in $W_j$ using an element of $S_2$.

\item Let $S_3$ consist of all words in the generators $g_K,g_{2K},g_{3K}$ of length at most $3K$, where $K = K_n$ is the number provided by Lemma \ref{lem7}(\ref{item1.2}) for our fixed $n$. We will explain this choice later.

\item Let $S_4 = \{g_{n+1}\}$. If $f(l) = n$, we will use $g_{n+1}$ to pass from $u_l$ to some point in $W_l$.

\item Let $S_5$ consists of all words in the generators $g_1,\ldots,g_n$ of length at most $|V_n|$. If $f(l) =n$, this choice will allow us to pass between any two points of $W_l$ using an element of $S_5$. \end{itemize}

Let $k$ be large enough that $Q_k$ contains $S_5 \cdot  S_4 \cdot S_3 \cdot S_2 \cdot S_1$. We assert that the following implies Claim \ref{lem3}.

\begin{claim} \label{cla1} If $v \in W_j \cup \{u_j\}$ and $f(j) \leq t$, then $\alpha_n$ appears in $\tau_{G/\mathrm{stab}_\beta(v)}$ within $Q_k/\mathrm{stab}_\beta(v)$.
 \end{claim}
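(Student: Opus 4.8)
The plan is to exhibit, inside the $\beta$-orbit of $v$, a faithful finite copy of $\alpha_n$ that is reachable from $v$ using only the prescribed generators. First I would replace $\tau_{G/\mathrm{stab}_\beta(v)}$ by $\beta$ itself: since $\beta$ is transitive, $g\,\mathrm{stab}_\beta(v)\mapsto\beta(g)\cdot v$ is a $G$-equivariant bijection of $G/\mathrm{stab}_\beta(v)$ onto $I$ carrying $Q_k/\mathrm{stab}_\beta(v)$ onto $\beta(Q_k)\cdot v$. So it is enough to find a $\beta(G_n)$-invariant $W\subseteq\beta(Q_k)\cdot v$ admitting a bijection $V_n\to W$ that intertwines $\alpha_n$ with $\beta\!\upharpoonright\! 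G_n$. The natural candidate is a block $W_l$ with $f(l)=n$: for $k\le n$ the permutation $\beta(g_k)$ restricted to $W_l$ is exactly the $\alpha_n$-action (the top cycles of such $g_k$ move only the $u$-layer), so the identification $W_l\cong V_{f(l)}=V_n$ is the required intertwiner. For this to be legitimate $W_l$ must be $\beta(G_n)$-invariant, and indeed the only modifications of $\beta$ meeting $W_l$ are the transposition $w_l\leftrightarrow u_l$ and the links $g_{l_{l-1}},g_{l_l}$, all of which use generators of index $>n$ since $l_i>\max(i,f(0),\dots,f(i+1))+1\ge f(l)+1=n+1$ for $i\in\{l-1,l\}$. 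Thus the whole task reduces to choosing an $l$ with $f(l)=n$ and showing $W_l\subseteq\beta(Q_k)\cdot v$.

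Next I would route $v$ into the $u$-layer. If $v\in W_j$, transitivity of $\alpha_{f(j)}$ on $W_j$ — witnessed by words in $g_0,\dots,g_{f(j)}$ of length at most $|V_{f(j)}|\le\max_{1\le i\le t}|V_i|$, which is legitimate because $f(j)\le t$ — moves $v$ to the marked point $w_j$ via an element of $S_1$, after which $g_{f(j)+1}\in S_2$ sends $w_j$ to $u_j$; if $v=u_j$ I take the identity in both $S_1$ and $S_2$. In either case an element of $S_2S_1$ brings $v$ to $u_j$.

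The crux, which I expect to be the main obstacle, is to move within the $u$-layer from $u_j$ to some $u_l$ with $f(l)=n$, using only $S_3$. Here I would apply Lemma~\ref{lem7}(\ref{item1.2}) with $K=K_n$ to choose $l$ with $f(l)=n$ and $\lfloor l/K\rfloor=\lfloor j/K\rfloor$, so that $u_j$ and $u_l$ lie in a common length-$K$ interval $B$, which is contained in a single interval at each of the scales $K$, $2K$, $3K$. The top cycle of $\beta(g_{cK})$ on such an interval omits exactly the points $u_i$ with $f(i)=cK-1$; since $K-1$, $2K-1$, $3K-1$ are distinct, each of $u_j$ and $u_l$ is omitted by at most one of $g_K,g_{2K},g_{3K}$, so by pigeonhole some $g_{cK}$ has a single top cycle containing both. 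A power $g_{cK}^{\,s}$ with $|s|\le 3K$, that is, an element of $S_3$, then carries $u_j$ to $u_l$. Verifying the nesting of the length-$K$ interval inside the coarser ones and running the pigeonhole carefully is the delicate part; this is precisely the combinatorics for which the three generators $g_K,g_{2K},g_{3K}$ and the function $f$ of Lemma~\ref{lem7} were designed.

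Finally I would close the loop: $g_{n+1}\in S_4$ transposes $u_l$ to $w_l\in W_l$, and the words of $S_5$, realizing transitivity of $\alpha_n$ on $W_l$, sweep out every point of $W_l$ starting from $w_l$. Composing the five stages gives $W_l\subseteq\beta(S_5S_4S_3S_2S_1)\cdot v\subseteq\beta(Q_k)\cdot v$. Transporting back through the orbit identification, $W_l$ witnesses that $\alpha_n$ appears in $\tau_{G/\mathrm{stab}_\beta(v)}$ within $Q_k/\mathrm{stab}_\beta(v)$, which is the claim.
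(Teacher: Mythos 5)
Your proposal is correct and follows essentially the same route as the paper's own argument: choose $l$ with $f(l)=n$ and $\lfloor l/K\rfloor=\lfloor j/K\rfloor$ via Lemma \ref{lem7}(i), take $W_l$ as the copy of $V_n$, and route $v\to w_j\to u_j\to u_l\to w_l\to W_l$ through $S_1,S_2,S_3,S_4,S_5$, handling the skipped $u$-points by noting that each point is omitted by at most one of $g_K,g_{2K},g_{3K}$. Your explicit verification that $W_l$ is $\beta(G_n)$-invariant and your pigeonhole/nesting details are fine and only slightly more spelled out than the paper's.
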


Indeed, suppose Claim \ref{cla1} holds and let $m >0$. Note that $C_{m!}(f)$ defined as in Lemma \ref{lem7} is exactly $|T_{m!}|$. Thus we have \begin{align} \theta_m(A_{n,k}) & = \frac{1}{|T_{m!}|} \cdot \bigl \vert \bigl \{v \in  T_{m!}\colon \mathrm{stab}_\beta(v) \in A_{n,k} \bigr \} \bigr \vert \label{eq9.0} \\ & \geq \frac{1}{|T_{m!}|} \cdot \bigl \vert \bigl \{ v \in T_{m!}: v \in W_j \cup \{u_j\} \mbox{ and }f(j) \leq t \bigr \} \bigr \vert \label{eq9.1} \\ & =  \frac{1}{|T_{m!}|} \sum_{n =1}^t (|V_n|+1) \cdot \bigl \vert \bigl \{j \in \{0,\ldots, m!-1 \}: f(j) = n \bigr \} \bigr \vert \label{eq9.2} \\ & > 1- \epsilon \label{eq9.3}, \end{align}

where \begin{itemize} \item (\ref{eq9.0}) follows from the definition of $\theta_m$, \item (\ref{eq9.1}) follows from (\ref{eq9.0}) by Claim \ref{cla1}, \item (\ref{eq9.2}) follows from (\ref{eq9.1}) since $|W_j| = |V_{f(j)}|$,  \item (\ref{eq9.3}) follows from (\ref{eq9.2}) by Lemma \ref{lem3}(\ref{item1.3}). \end{itemize} Thus it remains to establish Claim \ref{cla1}.

Fix $j$ with $f(j) \leq t$. By our choice of $K$, there is some $l$ such that $\lfloor j/K \rfloor = \lfloor l/K \rfloor$ and $f(l) = n$. Fix $v \in W_j \cup \{u_j\}$. Write $H = \mathrm{stab}_\beta(v)$ and let $P = \{gH: \beta(g) \cdot v \in W_l\}$. Since $\beta(G_n)$ acts on $W_l$ according to $\alpha_n$, it follows that $\alpha_n$ appears in $\tau_{G/H}$ within $P$. Therefore it is enough to show that $P \subseteq Q_k/H$, or equivalently $W_l \subseteq \beta(Q_k) \cdot v$. The idea is that we have chosen $k$ large enough that we can reach any point in $W_l$ from $v$ using the $\beta$ action of a word from $Q_k$.

By our choice of $S_1$, if $v\in W_j$ there is an element $\gamma \in S_1$ such that $\beta(\gamma) \cdot v = w_j$ where $w_j$ is the point in $W_j$ connected to $u_j$. The connection between $w_j$ and $u_j$ is made by $\beta(g_{f(j)+1})$. We have $g_{f(j) +1} \in S_2$ since $f(j) \leq t$.  Thus $u_j = \beta(\gamma)\cdot v$, where $\gamma \in S_2 \cdot S_1$.

Note that our assumption on $l$ guarantees that $l$ lies between the same pair of multiples of $K$ as $j$ does. We would like to say that this allows us to pass from $u_j$ to $u_l$ using $\beta(g_K)^i$ for some $i \in [-K, K]$. However, there is the minor issue of the points $u_d$ which are skipped the top cycles of $\beta(g_K)$. We can easily overcome this obstacle by noting that for any $d$, at most one of $\beta(g_K), \beta(g_{2K})$ and $\beta(g_{3K})$ skips over $u_d$, and therefore there is a word $\gamma'$ in $g_K,g_{2K},g_{3K}$ of length at most $3K$ such that $\beta(\gamma') \cdot u_j = u_l$. We have $\gamma' \in S_3$. 

Since $f(l) = n$, we see that $u_l$ is connected to $W_l$ by $\beta(g_{f(l)+1}) = \beta(g_{n+1})$. Therefore $\beta(g_{n+1} \gamma' \gamma) \cdot v \in W_l$. Since $W_l \subseteq \beta(S_5) \cdot \beta(g_{n+1} \gamma' \gamma)\cdot v$, we have that $W_l \subseteq \beta(Q_k) \cdot v$ and we are done.

\subsection{Proofs of lemmas} \label{subsec3}

\begin{proof}[Proof of Lemma \ref{lem1}] Let $h_1,\ldots,h_l,k_1,\ldots,k_{l'},g \in G$ and let $\epsilon > 0$. Let $m$ be large enough that $h_1,\ldots,h_l,$ $k_1,\ldots,k_{l'},g$ are words in the generators $\{g_0,\ldots,g_m\}$. Write \[ C = \{H \leq G: h_1,\ldots,h_l \in H \mbox{ and } k_1,\ldots,k_{l'} \notin H \}.  \] Note that $C$ is a clopen set and therefore there is some $n \geq m$ such that \begin{equation} \theta(C) \approx_\epsilon \theta_n(C) \mbox{ and } \theta(g C g^{-1}) \approx_\epsilon \theta_n(g C g^{-1}). \label{eq1}  \end{equation}

Noting that $F_n$ is $\alpha(\langle g,h_1,\ldots,h_l,k_1,\ldots,k_{l'}\rangle)$ invariant we have \begin{align*} \theta_n(g C g^{-1}) &= \frac{1}{|F_{n}|} \cdot \bigl \vert \bigl \{v \in F_{n}:  \alpha(g h_j g^{-1}) \cdot v = v \mbox{ for all } j \in \{1,\ldots,l\}  \\ &  \mbox{ and } \alpha(g k_j g^{-1}) \cdot v \neq v \mbox{ for all }j \in \{1,\ldots,l'\} \bigr \} \bigr \vert  \\  &= \frac{1}{|F_{n}|} \cdot \bigl \vert  \bigl \{v \in F_{n}:  \alpha(h_j) \alpha(g^{-1}) \cdot v = \alpha(g^{-1}) \cdot v \mbox{ for all } j \in \{1,\ldots,l\}  \\ & \mbox{ and } \alpha(k_j) \alpha(g^{-1}) \cdot v \neq \alpha(g^{-1}) \cdot v \mbox{ for all }j \in \{1,\ldots,l' \} \bigr \} \bigr \vert \\ & = \frac{1}{|F_{n}|} \cdot \bigl \vert  \bigl \{w \in F_{n}:  \alpha(h_j) \cdot w = w \mbox{ for all } j \in \{1,\ldots,l\}  \\ &\mbox{ and } \alpha(k_j) \cdot w \neq w \mbox{ for all }j \in \{1,\ldots,l' \} \bigr \} \bigr \vert \\ & = \theta_n(C)  \end{align*}

Then from (\ref{eq1}) we have $\theta(C) \approx_{2 \epsilon} \theta(g C g^{-1})$. \end{proof}

\begin{proof}[Proof of Lemma \ref{lem2}]
It is clearly enough to find such $V_n, \alpha_n$ such that for any $\beta$ as in that lemma, $\mathbf{s}_\beta$ is action-maximal and another sequence, also denoted below by  $V_n, \alpha_n$, such that for any $\beta$ as in that lemma, $\lambda_\beta$ is representation-maximal. Then by interlacing these two sequences, we have a sequence that achieves both goals.

{\it Case 1}: We first find the sequence for which the appropriate $\mathbf{s}_\beta$ is action-maximal. By \cite[Theorem 5.1]{K1}, there is a countably infinite set $J$ and a transitive action $\alpha$ of $G$ on $J$ such that $\mathbf{s}_\alpha$ is action-maximal. Identify $(X,\mu)$ with $2^J$ carrying the usual product measure. For a finite set $T \subseteq J$ and $\rho \in 2^T$, write
 \[ 
 N_\rho = \bigl \{x \in 2^J: x(v) = \rho(v) \mbox{ for all } v \in T \bigr \}.
 \]
  For $n \geq 1$, $\epsilon > 0$ and a finite set $T \subseteq J$, let $U_{n,\epsilon,T}$ be the set of all $\mathbf{c} \in A(G,X,\mu)$ such that 
 \[
\mu \bigl(\mathbf{s}_\alpha(g_k) \cdot N_{\rho} \cap N_{\sigma} \bigr) \approx_\epsilon \mu\bigl(\mathbf{c}(g_k) \cdot N_{\rho} \cap N_\sigma \bigr),
  \forall \sigma,\rho \in 2^T, \ k \in \{0,\ldots,n-1\}  .
 \]

Observe that the collection of all $U_{n,\epsilon,T}$ is a neighborhood basis at $\mathbf{s}_\alpha \in A(G,X,\mu)$. Let $(T_n)_{n=1}^\infty$ be an increasing sequence of finite subsets of $J$ with $\bigcup_{n=1}^\infty T_n = J$. Write $U_n = U_{n,2^{-n-|T_n|},T_n}$. Then the sets $U_n$ form a neighborhood basis at $\mathbf{s}_\alpha$. Note that for each $n\geq 1$ and each $k \in \{0,\ldots,n-1\}$, we can extend $\alpha(g_k) \upharpoonright \left ( T_n \cup \bigcup_{j=0}^{n-1} \alpha(g_j) \cdot T_n \right)$ to a permutation of $J$ which is trivial on the complement of a finite set containing $T_n \cup\bigcup_{j=0}^{n-1} \alpha(g_j) \cdot T_n$. Hence for each $n\geq 1$, we can find an action $\widehat{\alpha}_n$ of $G$ on $J$ with the following properties:

\begin{enumerate}[(I)] \item $\widehat{\alpha}_n(g_k) \cdot v = \alpha(g_k) \cdot v$, if $k \in \{0,\ldots,n-1\}$ and $v \in T_n$. \label{item1}
\item $\widehat{\alpha}_n(g_k)$ acts trivially if $k > n$. \label{item2}
\item There is a $\widehat{\alpha}_n$-invariant finite set $V_n \subseteq J$ such that $\widehat{\alpha}_n \upharpoonright (J \setminus V_n)$ is trivial and $\widehat{\alpha}_n \upharpoonright V_n$ is transitive. \label{item3} \end{enumerate}

By (\ref{item1}) we see that $\mathbf{s}_{\widehat{\alpha}_n}(g_k) \cdot N_\rho = \mathbf{s}_{\alpha}(g_k) \cdot N_\rho$ for all $\rho \in 2^{T_n}$ and $k \in \{0,\ldots,n-1\}$. Therefore $\mathbf{s}_{\widehat{\alpha}_n} \in U_n$.  Write $\alpha_n = \widehat{\alpha}_n \upharpoonright V_n$. By (\ref{item2}) all $g_k, k>n$, act trivially in $\alpha_n$. Observe that (\ref{item3}) implies that $\mathbf{s}_{\widehat{\alpha}_n} \cong \mathbf{s}_{\alpha_n} \times \boldsymbol \iota$, where $\boldsymbol \iota$ is the trivial action of $G$ on a nonatomic standard probability space. Thus for each $n\geq 1$ there is an isomorphic copy of $\mathbf{s}_{\alpha_n} \times \boldsymbol \iota$ in $U_n$.

Suppose $\beta$ is a transitive action of $G$ on a countably infinite set such that $\alpha_n$ appears in $\beta$ for each $n\geq 1$. Note that $\boldsymbol{s}_\beta$ is ergodic (see, e.g., \cite[2.1]{KT}). Then $\mathbf{s}_{\alpha_n \upharpoonright G_n }$ is a factor of $\mathbf{s}_{\beta \upharpoonright G_n}$ and hence $\mathbf{s}_{\alpha_n \upharpoonright G_n} \times (\boldsymbol \iota\upharpoonright G_n) $ is a factor of $\mathbf{s}_{\beta \upharpoonright G_n} \times (\boldsymbol \iota\upharpoonright G_n) $. Using the fact that the definition of $U_n$ depends only on $G_n$, this implies that for each $n\geq 1$ there is an isomorphic copy of $\mathbf{s}_\beta \times \boldsymbol \iota $ in $U_n$. Therefore there is a sequence of isomorphic copies of $\mathbf{s}_\beta \times \boldsymbol \iota$ in $A(G,X,\mu)$ which converges to $\mathbf{s}_\alpha$. Since the isomorphic copies of $\mathbf{s}_\alpha$ are dense in $A(G,X,\mu)$, this implies that the isomorphic copies of $\mathbf{s}_\beta \times \boldsymbol \iota$ are dense in $A(G,X,\mu)$. 

By \cite[Theorem 3.11]{T}, we see that any ergodic action $\mathbf{d}$ of $G$ is weakly contained in almost every ergodic component of $\mathbf{s}_\beta \times \boldsymbol \iota$. In particular, any ergodic action $\mathbf{d}$ of $G$ is weakly contained in $\mathbf{s}_\beta$ and therefore the isomorphic copies of $\mathbf{s}_\beta$ are dense in the ergodic actions. Since $G$ does not have Property ($\mathrm{T}$), \cite[Theorem 12.2]{K} implies that the isomorphic copies of $\mathbf{s}_\beta$ are dense in $\mathrm{A}(G,X,\mu)$.

{\it Case 2}: We next find a sequence $V_n, \alpha_n$, for which the appropriate $\lambda_\beta$ is representation-maximal. We start with a transitive action $\alpha$ of $G$ on a countably infinite set $J$ such that $\lambda_\alpha$ is representation-maximal (see \cite[Theorem 5.5]{K1}. Then proceed as in the proof of Case 1 to find $V_n, \alpha_n$ such that for some isomorphic copy $\sigma_n$ of $\lambda_{\alpha_n}\oplus\infty 1_G$, $(\sigma_n)$ converges to $\lambda_\alpha$, where $1_G$ is the trivial one-dimensional representation of $G$ and $\infty 1_G$ the direct sum of countably many copies of $1_G$, i.e., the trivial representation on a separable, infinite-dimensional Hilbert space. Let now $\beta$ be as above. Then the isomorphic copies of $\lambda_\beta\oplus\infty 1_G$ converge to $\lambda_\alpha$. By a result of Hjorth, see \cite[H.7]{K}, the irreducible representations are dense in $\textrm{Rep}(G,H)$. Every irreducible representation $\pi$ is $\preceq\lambda_\alpha$ and thus $\preceq_Z\lambda_\alpha\preceq_Z \lambda_\beta \oplus \infty 1_G$, where $\preceq_Z$ is weak containment in the sense of Zimmer. Recall that $\sigma\preceq_Z\rho$ iff $\sigma$ is in the closure of the isomorphic copies of $\rho$. Also $\sigma\preceq_Z \rho \implies \sigma\preceq \rho$ and for $\sigma$ irreducible, $\sigma\preceq_Z \rho \iff \sigma\preceq \rho$ (see \cite[page 397]{BHV} and \cite[page 209]{K}). Then by \cite[Proposition 3.5]{AE} $\pi$ is a subrepresentation of an ultrapower of $\lambda_\beta\oplus \infty 1_G$, which is of course of the form $\lambda^*_\beta\oplus \eta^*$, where $\lambda^*$ is an ultrapower of $\lambda_\beta$ and $\eta^*$ a trivial representation of $G$ on a Hilbert space $H^*$. Let $H_1$ be the space on which this subrepresentation acts, which is a $G$-invariant subspace of the direct sum of the space of $\lambda_\beta^*$ and $H^*$. Then if $v\in H^*$ and $v_1$ is its projection on $H_1$, $v_1$ is $G$-invariant, so as $\pi$ is irreducible, $v_1=0$, i.e., $H^*\perp H_1$. Thus $H_1$ is contained in the space of $\lambda_\beta^*$, i.e., $\pi$ is a subrepresentation of $\lambda_\beta^*$, so $\pi\preceq_Z\lambda_\beta$. Thus the isomorphic copies of $\lambda_\beta$ are dense in $\textrm{Rep}(G,H)$, i.e., $\lambda_\beta$ is representation-maximal.
 \end{proof}

\begin{proof}[Proof of Lemma \ref{lem7}]
Note that letting for $n\geq 1$, $A_n = f^{-1} (\{n\})$ the statement of the lemma is equivalent to the existence of a partition $\mathbb{N} = \bigsqcup_{n\geq 1}A_n$ with the following properties:

(i) For each $n\geq 1$ there is positive integer $K_n$ such that $A_n$ intersects each interval $I^n_i = [iK_n, (i+1)K_n), i = 0,1,2, \dots$.

(ii) Let $g:\mathbb{N}^+ \to \mathbb{N}^+ $ be defined by $g(n) = |V_n| +1$, where $V_n$ is as in Lemma \ref{lem2}. Then we have that for each $\epsilon >0$, there is $t>0$, such that for all $m>0$:

\[
\frac{\sum_{n>t}(|A_n\cap m|\cdot g(n))}{\sum_n(|A_n\cap m|\cdot g(n))} < \epsilon,
\]
where we identify here $m$ with $\{0,1, \dots , m-1\}$.

To construct $A_n, K_n$, first chose $a_2 < a_3 < \dots$ to be large enough so that $a_n$ is divisible by 3 and 
\[
\sum^{\infty}_{n=2}\frac{1}{a_2 \cdots a_n}<\frac{1}{3} {\textrm{ and }}  \frac{a_n}{3}> \frac{g(n)2^n}{g(n-1)}.
\]
We let $A'_1 =\{2i\colon i\in \mathbb{N}\}$ and also put $K_1= 2, K_n = 2 a_2 \cdots a_n$ for $n\geq 2$. We will then inductively define pairwise disjoint $A_2, A_3, \dots$,  which are also disjoint from $A'_1$, to satisfy (ii) above and so that for $n\geq 2$, $A_n$ has exactly one member in each interval $I_i^n$ as above, and finally we put $A_1 = \mathbb{N}\setminus \bigcup_{n=2}^\infty  A_n$.

So assume that $A'_1, A_2, \dots , A_{n-1}$ have been constructed (this is just $A'_1$, if $n= 2$). To find $A_n$, so that (i) above is satisfied, it is enough to have for each $i$, 
\[
K_n > \frac{3}{2} \left|(A'_1 \cup A_2\cup \cdots A_{n-1}) \cap I^n_i\right|.
\]
But
\[
 \left|(A'_1 \cup A_2\cup \cdots A_{n-1}) \cap I^n_i\right| = a_2 \cdots a_n + a_3\cdots a_n+ \cdots + a_{n-1} a_n + a_n,
 \]
 so this follows from $\sum^{\infty}_{n=2}\frac{1}{a_2 \cdots a_n}<\frac{1}{3}$. Also for $i=0$, we can choose the element of $A_n$ in $[0,K_n)$ to be $\geq \frac{K_n}{3}$.

We finally check that (ii) is satisfied. Fix $\epsilon > 0$ and choose $t>1$ so that $\sum_{n>t}^\infty  2 ^{-n}< \epsilon$. Consider now any $m>0$ and $n>t$. 

{\it Case 1}. $m\geq K_n$. Then for some $s>1$, we have that $m\in I^n_{s-1}$ and $|A_n\cap m |\leq s$, while 
\[
\sum_n |A_n\cap m|\cdot g(n)\geq |A_{n-1}\cap m|\cdot g(n-1) \geq (s-1) a_n\cdot g(n-1)
\]
so
\[
\frac{|A_n\cap m|\cdot g(n)}{\sum_n|A_n\cap m|\cdot g(n)}\leq \frac{s\cdot g(n)}{(s-1)\cdot g(n-1)}\cdot\frac{1}{a_n} < 2^{-n}.
\]

{\it Case 2}. $m < K_n$. Then either $m\leq\frac{K_n}{3}$ and $|A_n\cap m|=0$ or $m>\frac{K_n}{3}$ and $|A_n\cap m|\leq1$, in which case also 
\[
|A_{n-1}\cap m| \geq \frac{a_n}{3}.
\]
So for any $m<K_n$, 
\[
\frac{|A_n\cap m|\cdot g(n)}{\sum_n |A_n\cap m|\cdot g(n)} \leq \frac{g(n)}{(\frac{a_n}{3}) g(n-1)}< 2^{-n}.
\]
Thus for any $n>t$, we have 
\[
\frac{|A_n\cap m|\cdot g(n)}{\sum_n |A_n\cap m|\cdot g(n)} < 2^{-n}
\]
and so
\[
\frac{\sum_{n>t}(|A_n\cap m|\cdot g(n))}{\sum_n(|A_n\cap m|\cdot g(n))} < \epsilon
\] \end{proof}

\section{Proof of Theorem \ref{thm2}}






We note that if $\lambda_{G/H}$ is representation-maximal, then $H$ is not amenable. This is because $1_G\preceq \lambda_{G/H}$ implies $\tau_{G/H}$ is amenable (see \cite[Theorem 1.1]{KT}).

We will use the notion of a random Bernoulli shift over an invariant random subgroup; we refer the reader to \cite[Section 5.3]{T} and \cite[Proposition 45]{AGV} for details. Let $\theta$ be the invariant random subgroup constructed in Theorem \ref{thm1} and let $\mathbf{s}_\theta$ be the $\theta$-random Bernoulli shift. Note that for almost every ergodic component $\mathbf{b}$ of $\mathbf{s}_\theta$, almost all stabilizers of $\mathbf{b}$ lie in $\mathcal{M}_G$ and hence the type of $\mathbf{b}$ is supported on $\mathcal{M}_G$. Fix such an action $\mathbf{b}$. Let $(Y,\nu)$ be the underlying space of $\mathbf{b}$.

For $y \in Y$ write $H_y = \mathrm{stab}_\mathbf{b}(y)$. By \cite[Proposition 8]{DG} we have $\lambda_{G/H_y} \preceq  \kappa_0^\mathbf{b}$ for $\nu$-almost every $y \in Y$. Since the type of $\mathbf{b}$ is supported on $\mathcal{M}_G$, for $\nu$-almost every $y$ we have that $\lambda_{G/H_y}$ is representation-maximal and so $ \kappa_0^\mathbf{b}$ is representation-maximal. Let $\mathbf{a} = \mathbf{b} \times \mathbf{s}_G$. Then $\mathbf{a}$ is free and ergodic and $ \kappa_0^\mathbf{a}$ is representation-maximal. Suppose, toward a contradiction, that $\mathbf{a}$ were action-maximal. 

Let $S \subseteq G^2$ be the collection of all pairs $(g,h)$ such that $\langle g,h \rangle$ is nonamenable. Since $\lambda_{G/H_y}$ is representation-maximal for $\nu$-almost every $y \in Y$, and so $H_y$ is not amenable, we see that $S \cap H_y^2$ is nonempty for $\nu$-almost every $y$. Let $\phi:\mathbb{N} \to S$ be an enumeration of $S$. For $y \in Y$ let $\phi_y = \min \{n: \phi(n) \in H^2_y \}$. Then there is some $k \in \mathbb{N}$ such that $\nu(\{y:\phi_y = k\}) > 0$. Write $A = \{y:\phi_y = k\}$ and let $N$ be the subgroup of $G$ generated by the coordinates of $\phi(k)$. Note that for $y \in A$, we have $N \subseteq H_y$, and so $\mathbf{b} \upharpoonright N$ is trivial on $A$. By \cite[Page 74]{K}, since $\mathbf{a}$ is action-maximal for $G$, we have that $\mathbf{a} \upharpoonright N$ is action-maximal for $N$. Observe that \[ \mathbf{a} \upharpoonright N = (\mathbf{b} \upharpoonright N) \times (\mathbf{s}_G \upharpoonright N) \cong (\mathbf{b} \upharpoonright N) \times (\mathbf{s}_N)^\mathbb{N} \cong (\mathbf{b} \upharpoonright N) \times \mathbf{s}_N.\] So writing $\mathbf{c} = (\mathbf{b} \upharpoonright N) \times \mathbf{s}_N$, we have that $\mathbf{c}$ is action-maximal for $N$. 

By  \cite[Theorem 3.11]{T}, this implies that any ergodic action $\mathbf{d}$ of $N$ is weakly contained in almost every ergodic component of $\mathbf{c}$. Note that if $y \in A$, then $\boldsymbol \iota_{\{y\}} \times \mathbf{s}_N \cong \mathbf{s}_N$ is an ergodic component of $\mathbf{c}$, where by $\boldsymbol \iota_{\{y\}}$ we mean the trivial action of $N$ on the one-point space $\{y\}$. Therefore $\mathbf{d} \preceq \mathbf{s}_N$. Since $N$ does not have property (T), the ergodic actions of $N$ are dense in $A(N,X, \mu)$ (see \cite[12.2]{K}), so the isomorphic copies of $\mathbf{s}_N$ are dense in $A(N,X, \mu)$. But by \cite[Proposition 13.2]{K} this contradicts the fact that $N$ is nonamenable. 

\begin{remark}
For $G=\mathbf{F}_\infty$, let $\mathbf{a}$ be as in Theorem \ref{thm2}. Then for any irreducible $\pi$ we have $\pi\preceq \kappa_0^{\mathbf{a}}$, so 
$\pi\preceq_Z \kappa_0^{\mathbf{a}}$. Thus, as the irreducible representations are dense, $\pi\preceq _Z\kappa_0^{\mathbf{a}}$, for all $\pi$. Thus there is a free ergodic action $\mathbf{b}$ such that $\kappa_0^{\mathbf{b}}\preceq_Z \kappa_0^{\mathbf{a}}$ but $\mathbf{b}\npreceq \mathbf{a}$, which is a somewhat stronger negative answer to Problem \ref{prob}.
\end{remark}


\bigskip
\noindent Department of Mathematics

\noindent California Institute of Technology

\noindent Pasadena, CA 91125

\medskip
\noindent\textsf{pjburton@caltech.edu, kechris@caltech.edu}
\end{document}